\newtheorem{theorem}{Theorem}
\newtheorem{proposition}[theorem]{Proposition}
\newtheorem*{conjecture*}{Conjecture}
\newtheorem*{observation*}{Observation}
\newtheorem*{claim*}{Claim}
\numberwithin{equation}{section} 
\newcommand{\R}{\mathbb{R}} 
\newcommand{\N}{\mathbb{N}} 
\newcommand{\GG}{\ensuremath{\mathcal{G}}}
\newcommand{\HH}{\ensuremath{\mathcal{H}}}
\newcommand{\PP}{\ensuremath{\mathcal{P}}}
\newcommand{\RR}{\ensuremath{\mathcal{R}}}
\newcommand{\RRR}{\ensuremath{\hat{R}}}
\newcommand*{\medcup}{\mathbin{\scalebox{1.2}{\ensuremath{\cup}}}}
\begin{document}

\title{Lower Bound on the Size-Ramsey Number of Tight Paths}

\author{Christian Winter}
\affil{University of Hamburg, Hamburg, Germany; and Karlsruhe Institute of Technology, Karlsruhe, Germany; E-mail: \textit{christian.winter@kit.edu}}

\maketitle

\begin{abstract}
The size-Ramsey number $\RRR^{(k)}(\HH)$ of a $k$-uniform hypergraph $\HH$ is the minimum number of edges in a $k$-uniform hypergraph $\GG$ with the property that every `$2$-edge coloring' of $\GG$ contains a monochromatic copy of $\HH$.
For $k\ge2$ and $n\in\N$, a $k$-uniform tight path on $n$ vertices~$\PP^{(k)}_{n}$ is defined as a $k$-uniform hypergraph on $n$ vertices for which there is an ordering of its vertices such that the edges are all sets of $k$ consecutive vertices with respect to this order.

We prove a lower bound on the size-Ramsey number of $k$-uniform tight paths, which is, considered assymptotically in both the uniformity $k$ and the number of vertices $n$,
$\RRR^{(k)}(\PP^{(k)}_{n})= \Omega\big(\log (k)n\big)$.
\end{abstract}
\smallskip
\textbf{Keywords\ --} size-Ramsey, Ramsey theory, tight path, uniform hypergraph\\

\section{Introduction}\label{sec_intro}

For a $k$-graph $\GG=(V,E)$, i.e.\ a $k$-uniform hypergraph on a vertex set $V$ and an edge set $E\subseteq \binom{V}{k}$,
a \textit{$2$-edge coloring} of $\GG$ is a function $c\colon E(\GG)\to \{\text{red},\text{blue}\}$
that maps every edge to one of the given colors \textit{red} or \textit{blue}.
In the following we refer to such a function simply as a \textit{coloring} of $\GG$. 
We say that a $k$-graph $\GG$ has the \textit{Ramsey property} $\GG\rightarrow \HH$ for some $k$-graph $\HH$ if every coloring of $\GG$ contains a monochromatic copy of $\HH$.
The \textit{size-Ramsey number} of a $k$-graph $\HH$ is defined as 
$$\RRR^{(k)}(\HH)=\min\big\{|E(\GG)| \colon \GG\ k\text{-graph with }\GG\rightarrow \HH\big\}.$$

Size-Ramsey problems were introduced by Erd\H{o}s, Faudree, Rousseau and Schelp \cite{erdos_size} for graphs.
One of the focus points of studies on the graph case is estimating the size-Ramsey number of paths.
Beck \cite{beck83} disproved a conjecture of Erd\H{o}s \cite{erdos_pn} by showing that $\RRR^{(2)}(P_n)=O(n)$.
Since then, estimates on this number have been gradually improved, with the current best known bounds being
$\big(3.75-o(1)\big)n\le\RRR^{(2)}(P_n)\le74n$
given by Bal, DeBiasio \cite{bal_lb} and Dudek, Pralat \cite{dud_74}, respectively.
\\

Let $n,k\in\N$ with $k\ge2$.
A \textit{$k$-uniform tight path} on $n$ vertices $\PP^{(k)}_{n}$ is a $k$-graph on $n$ vertices 
for which there exists an ordering of its vertices such that every edge is a $k$-element set of consecutive vertices with respect to this order, 
two consecutive edges have precisely $k-1$ vertices in common, and there are no isolated vertices.
Equivalently, $\PP^{(k)}_{n}$ is a $k$-graph isomorphic to the hypergraph $(\{1,\dots,n\},E)$ with edge set $$E=\big\{\{i,\dots,i+k-1\}\colon i\in\{1,\dots,n-k+1\}\big\}.$$
If the uniformity is clear from the context we omit the prefix `$k$-uniform' when referring to tight paths.
\\

%
%
Research on the size-Ramsey number of hypergraphs has been substantially driven forward by Dudek, La Fleur, Mubayi and Rödl \cite{dud_general}. 
Among other results, they conjectured that the size-Ramsey number of tight paths is linear in terms of $n$. 
This conjecture was recently verified by Letzter, Pokrovskiy and Yepremyan \cite{let_ub}.

\begin{theorem}[\cite{let_ub}]\label{ub_linear}
Let $k\ge 2$ be fixed. Then
$$\RRR^{(k)}(\PP^{(k)}_{n})=O(n).$$
\end{theorem}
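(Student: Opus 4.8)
The plan is to follow the template behind all linear size-Ramsey bounds for paths: it suffices to exhibit, for every sufficiently large $n$, a single host $k$-graph $\GG$ with $|E(\GG)|=O_k(n)$ such that $\GG\to\PP^{(k)}_{n,k-1}$. Since $k$ is fixed, I would look for $\GG$ on $N=C_k n$ vertices. The cheapest candidate is the random $k$-graph $\GG^{(k)}(N,p)$ with $p$ chosen so that $\binom{N}{k}p=\Theta(n)$, i.e.\ $p=\Theta_k(n^{1-k})$; then $\GG$ has $\Theta_k(n)$ edges whp and bounded average degree. (If one wants a constructive bound, an explicit high-girth or pseudorandom $k$-graph of the same density could serve instead, at the cost of extra work.) The reduction thus turns the theorem into the statement: whp, every $2$-colouring of $E(\GG)$ contains a monochromatic tight path on $n$ vertices.

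For the colouring step, fix a colouring and pass to the majority colour class $\GG'$, which retains a constant fraction of the edges; since $\GG'$ is monochromatic, any tight path found inside it is automatically monochromatic, so from here colours play no further role. The task is to show $\GG'$ contains a tight path on $n$ vertices, and I would do this by the absorption method adapted to tight paths in the sparse regime: (i) a \emph{connecting lemma} --- for (almost) every ordered pair of ``ends'', i.e.\ ordered $(k-1)$-tuples $A,B$ occurring as ends of edges of $\GG'$, there is a short tight path in $\GG'$ joining $A$ to $B$; (ii) an \emph{absorbing tight path} $Q$ of length $o(n)$ that can swallow any small leftover set of vertices and stay a tight path; (iii) a greedy \emph{cover} of all but $o(n)$ vertices of $\GG'$ by a bounded number of tight paths, which one then stitches together using (i) and finishes using (ii). Choosing $C_k$ and the length of $Q$ so that the greedy cover misses only $o(n)$ vertices, the stitched-and-absorbed object is a monochromatic tight path on at least $n$ vertices.

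The crux is step (i): in the graph case one avoids connecting lemmas altogether via a DFS argument on the majority colour class, but for tight paths there is no such shortcut, because tight connectivity is a genuinely higher-order condition --- and in a host with only $O_k(n)$ edges the ``link'' of a typical $(k-1)$-tuple is extremely sparse (indeed for $k\ge3$ the codegree of a random $(k-1)$-tuple is $\Theta(n^{2-k})\to0$), so the dense-hypergraph connecting arguments, which rely on positive codegree density, do not apply. Making (i) work requires either (a) a delicate branching-process/second-moment analysis showing that short tight paths between prescribed ends appear whp in $\GG^{(k)}(C_kn,\Theta(n^{1-k}))$ once one restricts to the sublinear set of $(k-1)$-tuples of positive codegree, or (b) replacing the plain random host by a structured one --- for instance a bounded-length ``power'' of a robustly path-Ramsey random graph, or a tensor-type construction over a graph supplied by Beck's theorem $\RRR^{(2)}(P_n)=O(n)$ --- engineered so that tight connectivity and the absorbing structure are built in while the edge count stays linear. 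An inductive variant, reducing $\RRR^{(k)}$ to $\RRR^{(k-1)}$ by adding one ``layer'' to a $(k-1)$-uniform host and bottoming out at Beck's bound, is the other natural route, with the same difficulty relocated into showing that a monochromatic $(k-1)$-uniform tight path in the base lifts to a monochromatic $k$-uniform tight path in the layered host. Either way, one must also carry out the routine bookkeeping that the absorbing-path length and the constant $C_k$ leave only $o(n)$ uncovered vertices, so that the final monochromatic tight path genuinely has at least $n$ vertices.
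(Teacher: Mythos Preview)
This theorem is not proved in the paper; it is quoted from \cite{let_ub} (Letzter, Pokrovskiy, Yepremyan) as background to motivate the paper's own results, which are \emph{lower} bounds. There is therefore no proof in the paper to compare your proposal against.

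As for the proposal itself: what you have written is a strategy outline, not a proof. You correctly identify the template (a sparse host plus a density/absorption argument in the majority colour), and you correctly locate the genuine obstacle --- the connecting lemma for tight paths in a host with only $O_k(n)$ edges, where typical $(k-1)$-tuples have vanishing codegree --- but you do not resolve it. Both of your suggested routes (a second-moment analysis in $\GG^{(k)}(C_kn,\Theta(n^{1-k}))$, or a structured/layered host reducing to Beck's theorem) are plausible directions, yet each hides substantial work that you leave unspecified. In particular, the plain random $k$-graph at density $p=\Theta(n^{1-k})$ is almost certainly too sparse for a direct connecting lemma between arbitrary ends, so option (a) as stated would fail without further restriction of the host; and option (b) is essentially a pointer to the actual paper \cite{let_ub}, whose construction is a carefully designed power of a hypergraph tree rather than a random object. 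So the proposal names the right difficulty but does not overcome it.
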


Regarding a lower bound on this number, the following is a simple observation.

\begin{observation*}\label{lb_trivial}
Let $n,k\in\N$, $k\ge2$. Then
$$\RRR^{(k)}(\PP^{(k)}_{n})\ge 2n-2k+1.$$
\end{observation*}
\medskip

In this paper we show an improved lower bound on the size-Ramsey number of tight~paths.

\begin{theorem}\label{lb_tight2}
Let $n\ge 7$. Then
$$\RRR^{(3)}(\PP^{(3)}_{n})\ge\tfrac{8}{3} n -\tfrac{28}{3}.$$
\end{theorem}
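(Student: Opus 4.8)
The plan is to argue by contradiction: suppose $\GG$ is a $3$-graph with $\GG\to\PP^{(3)}_{n,2}$ and $|E(\GG)| = m < \tfrac{8}{3}n - \tfrac{28}{3}$, and construct a $2$-coloring of $\GG$ with no monochromatic tight path on $n$ vertices. The key structural observation for tight paths is that a tight path on $n$ vertices has $n-2$ edges, and more importantly, in any $3$-graph a monochromatic tight path on $n$ vertices forces a long ``tight walk'' structure through vertices of reasonably high degree. So first I would clean up $\GG$: iteratively delete any vertex whose degree is below some threshold $d$ (to be optimized, likely $d$ around $4$ or so), since such a vertex can participate in at most $d$ edges and deleting it only removes few edges; if this process removes too many vertices we immediately win because the surviving graph is too small to contain $\PP^{(3)}_{n,2}$. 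After cleaning, every vertex of the reduced graph $\GG'$ has degree at least $d$, and $|V(\GG')|$ is still large (close to $n$), while $|E(\GG')|$ is still less than $\tfrac{8}{3}n - \tfrac{28}{3}$, which combined with the minimum degree condition forces $|V(\GG')|$ to be small — this is where the constant $\tfrac{8}{3}$ should emerge from balancing $\tfrac{d}{3}|V(\GG')| \le |E(\GG')|$ against the number of deleted vertices.

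Next, the coloring step. The natural strategy for paths is a ``random-like'' or interval-based coloring that breaks up any potential long monochromatic path. For tight paths specifically, I would look at the ``link structure'': for each pair of vertices $\{u,v\}$, the link is the set of $w$ with $\{u,v,w\}\in E(\GG')$. A tight path on $n$ vertices, read along its vertex ordering $v_1,\dots,v_n$, uses the edges $\{v_i,v_{i+1},v_{i+2}\}$, so consecutive pairs $\{v_i,v_{i+1}\}$ each have their ``successor'' $v_{i+2}$ in the link. The idea is to two-color the edges so that in each color class, the resulting ``pair-successor'' digraph (on vertex-pairs) has no long directed path — for instance by partitioning $V(\GG')$ (or the pairs) into a bounded number of blocks and coloring an edge red or blue according to some parity or block-membership rule, ensuring that a monochromatic tight path cannot traverse more than $O(|V(\GG')|/c)$ vertices before being forced to repeat or die. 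Because $|V(\GG')|$ is forced to be much smaller than $n$ by the edge count bound, no monochromatic copy of $\PP^{(3)}_{n,2}$ can fit.

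The main obstacle I expect is making the coloring genuinely obstruct tight paths rather than loose paths: tight paths are very ``rigid,'' so a crude interval coloring of the vertices (color $\{v_i,v_{i+1},v_{i+2}\}$ red iff, say, $\max$ index is in a certain residue class) does kill long monochromatic tight paths within any fixed vertex ordering, but $\GG$ can be an arbitrary $3$-graph with many overlapping potential orderings, so one must choose a vertex labeling of $\GG'$ adapted to its actual edge set. I would therefore aim for a labeling/coloring obtained greedily or via a potential-function argument: order $V(\GG')$ as $x_1,\dots,x_N$ and color each edge by a rule depending only on the positions of its vertices in this order, chosen so that any tight path in color $c$ must have its vertex ordering ``monotone-ish'' in the $x$-order on long stretches, capping its length at $c' N < n$. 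Quantitatively, the bookkeeping that has to be done carefully is: (i) the number of low-degree vertices deleted versus edges saved, giving the coefficient $\tfrac83$; (ii) the additive slack $-\tfrac{28}{3}$ and the hypothesis $n\ge 7$, which should come from small-case boundary effects in the cleaning and from the tight path needing at least $n-2\ge 5$ edges and $n\ge 7$ vertices; and (iii) verifying that the coloring leaves every color class with components (in the appropriate tight-connectivity sense) on fewer than $n$ vertices. I would optimize the degree threshold $d$ at the end so that the deletion loss and the minimum-degree lower bound on $|V(\GG')|$ meet exactly at $\tfrac83 n - \tfrac{28}{3}$.
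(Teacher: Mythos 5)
There is a genuine gap: your proposal never actually produces the coloring it needs, and the reductions leading up to it do not work as stated. First, the cleaning step is unsound. Deleting vertices of degree below a threshold $d$ does not let you ``immediately win'': the coloring you must exhibit is a coloring of all of $\GG$, and the edges incident to deleted vertices still have to be colored and can perfectly well carry a monochromatic tight path (a tight path only needs degree at most $3$ at each of its vertices, so it can live entirely on low-degree vertices). Likewise, the hoped-for conclusion that the edge bound forces $|V(\GG')|$ to be ``much smaller than $n$'' is false: with fewer than $\tfrac{8}{3}n$ edges the minimum-degree bound only gives $|V(\GG')|\le 3m/d$, which is below $n$ only for $d>8$, and then the accounting of deleted edges versus the target constant $\tfrac{8}{3}$ does not close (and, as noted, deletion does not neutralize the deleted part anyway). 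Second, and more fundamentally, the heart of your plan --- an ordering-based or block/parity coloring of an \emph{arbitrary} sparse $3$-graph such that every monochromatic ``tight component'' spans fewer than $n$ vertices --- is exactly the hard step, and you only describe it as something to be found ``greedily or via a potential-function argument'' without any construction or proof. As it stands, nothing in the proposal rules out a monochromatic $\PP^{(3)}_{n,2}$ in either color class, so the contradiction is never reached.

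It is worth contrasting this with the paper's argument, which runs in the opposite direction: instead of assuming few edges and trying to build a bad coloring from scratch, it uses the Ramsey property of $\GG$ three times to extract three pairwise edge-disjoint tight paths and simply counts their edges. One first takes a tight path $\PP_0$ on about $\tfrac{2}{3}n$ vertices; coloring its $1$-neighborhood $N_{>1}(E(\PP_0))$ red, a red tight path on $n$ vertices would have every edge meeting $V(\PP_0)$ in at least $2$ vertices, which by a double-counting proposition would force $|V(\PP_0)|\ge\tfrac{2(n-2)}{3}$, a contradiction; hence a blue path $\PP_1$ on $n-1$ vertices exists, disjoint from that neighborhood. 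Then coloring $E(\PP_0)\cup E(\PP_1)$ red, a red tight path would have to switch between the two edge sets along consecutive edges sharing $2$ vertices, contradicting the neighborhood-disjointness, so a third path $\PP_2$ on $n$ vertices appears. The bound $\tfrac{8}{3}n-\tfrac{28}{3}$ is just $(\tfrac{2}{3}n-\tfrac{13}{3})+(n-3)+(n-2)$, i.e.\ the coefficient $\tfrac{8}{3}$ arises as $\tfrac{2}{3}+1+1$, not from any degree threshold. If you want to salvage your approach, you would have to either supply the missing coloring construction with a proof that both color classes have small tight components (which appears substantially harder than the problem itself), or switch to the extraction-of-disjoint-paths strategy above.
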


\begin{theorem}\label{lb_tight}
Let $k\ge4$ and $n>\frac{k^2+k-2}{2}$. Then
$$\RRR^{(k)}(\PP^{(k)}_{n})\ge \big\lceil\log_2(k+1)\big\rceil\cdot n-2k^2.$$
\end{theorem}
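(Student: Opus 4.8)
My plan is to prove the equivalent statement that every $k$-graph $\GG$ with $|E(\GG)| < \lceil\log_2(k+1)\rceil\cdot n - 2k^2$ admits a $2$-coloring in which no color class contains a tight path on $n$ vertices, and to do this by induction on the uniformity $k$. The induction is driven by the arithmetic identity $\lceil\log_2(k+1)\rceil = \lceil\log_2(\lceil(k-1)/2\rceil+1)\rceil + 1$: writing $k':=\lceil(k-1)/2\rceil\le k/2$, one unit of the logarithm disappears when passing to uniformity $k'$, and since $k'<k$ the hypothesis $n>\frac{k^2+k-2}{2}$ is inherited in the form $n>\frac{k'^2+k'-2}{2}$ with room to spare. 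The base cases $k\in\{2,3\}$ need no new idea: there the claimed bound is weaker than the trivial bound from the Observation (e.g.\ $2n-3\ge 2n-8$ and $2n-5\ge 2n-18$), and every $k\ge4$ reduces through the halving to one of these.

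For the inductive step I would associate to $\GG$ a $k'$-uniform auxiliary hypergraph $\GG'$ --- built from the $k'$-shadow of $\GG$ (the $k'$-subsets contained in an edge of $\GG$), possibly after pruning --- with two properties. First, $\GG'$ should be sparse: $|E(\GG')|\le|E(\GG)|-n+O(k^2)$, so that the assumed bound $|E(\GG)|<\lceil\log_2(k+1)\rceil\cdot n-2k^2$ translates into $|E(\GG')|<\lceil\log_2(k'+1)\rceil\cdot n-2k'^2$ and the inductive hypothesis applies to $\GG'$, yielding a $2$-coloring $c'$ of $\GG'$ with no monochromatic tight $k'$-path on $n$ vertices. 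Second, $c'$ should lift to a $2$-coloring $c$ of $\GG$ with no monochromatic tight $k$-path on $n$ vertices; the geometric fact underpinning this is that the $k'$-intervals of any realization $v_1,\dots,v_n$ of $\PP^{(k)}_{n,k-1}$ themselves realize $\PP^{(k')}_{n,k'-1}$, and each lies inside an edge of the $k$-path --- so a monochromatic $k$-path of $\GG$ under $c$ ought to force a monochromatic $k'$-path of $\GG'$ under $c'$.

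The main obstacle --- and the technical heart of the proof --- is making this descent honest for \emph{both} colors simultaneously. The naive lift ``$c(e)=\mathrm{red}$ iff every $k'$-subset of $e$ is red under $c'$'' controls red $k$-paths but is asymmetric: a blue $k$-path of the lift need not contain a blue $k'$-path below. Resolving this requires a more careful design of $\GG'$ and of the colour-correspondence --- for instance, recording for each edge of $\GG$ the list of its candidate ``$k'$-interval'' sub-edges and enforcing consistency along the overlap pattern of a tight path, so that a monochromatic $k$-path of either colour is faithfully witnessed in $\GG'$ --- while keeping $\GG'$ sparse enough for the edge count above. Around this core I also expect routine work: reducing to a tight-connected piece of $\GG$ (or handling components and their $O(k^2)$-sized boundaries separately), and absorbing the end-of-path truncation losses (at most $k-k'<k$ vertices and the incident edges) into the $-2k^2$ slack, the latter summing geometrically over the $\lceil\log_2(k+1)\rceil$ levels since $\sum_j (k/2^j)^2 = O(k^2)$.
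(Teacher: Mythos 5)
Your proposal is not a proof: the two steps on which the whole induction rests are exactly the ones left open. First, the colour-correspondence. You correctly observe that the naive lift (``$c(e)=\mathrm{red}$ iff every $k'$-subset of $e$ is red under $c'$'') only controls one colour, and you then say that resolving the asymmetry ``requires a more careful design of $\GG'$ and of the colour-correspondence'' --- but no such design is given, and it is not at all clear one exists: a blue $k$-edge only certifies that \emph{some} $k'$-subset is blue, and these witnesses need not align into the $k'$-intervals of any ordering, so a blue tight $k$-path gives no monochromatic structure in the shadow. Since the theorem is a statement about \emph{both} colour classes simultaneously, this acknowledged hole is the entire content of the claimed inductive step. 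Second, the sparsity claim $|E(\GG')|\le|E(\GG)|-n+O(k^2)$ is asserted with no construction behind it. For the $k'$-shadow (pruned or not) there is no reason for such a loss of $n$ edges --- the shadow of a $k$-graph typically has \emph{more} edges than the original, up to $\binom{k}{k'}$ per edge --- and without that inequality the inductive hypothesis cannot be invoked at level $k'$ with the stated constant $\lceil\log_2(k'+1)\rceil$. In effect you would need a lemma of the form ``if $\GG\rightarrow\PP^{(k)}_{n,k-1}$ then some $k'$-graph $\GG'$ with $|E(\GG')|\le|E(\GG)|-n+O(k^2)$ satisfies $\GG'\rightarrow\PP^{(k')}_{n,k'-1}$,'' and nothing in the proposal proves or even approaches this.

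For comparison, the paper never changes the uniformity. It stays inside the one $k$-graph $\GG$ and runs $\lambda=\lceil\log_2(k+1)\rceil-1$ extraction rounds: in round $i$ the previously found tight path is split into two vertex-disjoint halves $Z^1_i,Z^2_i$ on $\lfloor n/2\rfloor$ vertices each, all edges in the $q(i)$-neighbourhoods $N_{>q(i)}(Z^a_b)$ with $q(i)=\bigl(1-2^{-i}\bigr)(k+1)$ are coloured red, and a counting argument (Proposition \ref{prop_gen}) shows a red tight path on $n$ vertices would force $|\medcup Z^a_b|>n/2$, which is impossible; the Ramsey property then yields a new blue tight path, edge-disjoint from everything extracted so far. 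Each round contributes about $n-2k$ fresh edges, and the choice of $q(i)$ is what makes the $\log_2(k+1)$ rounds possible. If you want to salvage a halving-of-$k$ intuition, it is the threshold $q(i)$, not the uniformity, that gets halved; your shadow-based descent would need both missing ingredients supplied before it could be assessed further.
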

\medskip

Section \ref{sec_neigh} discusses some properties which are useful for the main proofs. 
In Section \ref{sec_tight} the proofs of Theorem \ref{lb_tight} and Theorem \ref{lb_tight2} are presented.
\bigskip

\section{Preliminaries}\label{sec_neigh}

Let $\GG$ be a $k$-graph and $Z\subseteq E(\GG)$ be an edge set. 
Let $\medcup Z =\{v\in e\colon e\in Z\}$ be the set of vertices that are \textit{covered} by~$Z$.
We say that the $k$-graph $(\medcup Z, Z)$ is \textit{formed} by $Z$.
Given a vertex set $W\subseteq V(\GG)$ the subhypergraph \textit{induced} by $W$ is $\GG[W]=(W,\{e\in E(\GG)\colon e\subseteq W\})$.
For $q\in\R$, $0\le q< k$, the \textit{$q$-neighborhood} of $Z$ is the edge set
$$N_{> q}(Z)=\big\{e\in E(\GG) \colon \exists e'\in Z\text{ with }|e\cap e'|> q\big\}.$$
Note that we allow $e=e'$, thus $Z\subseteq N_{>q}(Z)$ for all $0\le q< k$.
\\

For each $k$-uniform tight path $\PP$ on $n$ vertices we fix an ordering of the vertices such that each edge is a set of consecutive vertices.
We say that such an enumeration $V(\PP)=\{v_1,\dots,v_n\}$ is \textit{according to} $\PP$.
For a $k$-graph $\GG$, we define $e(\GG)=|E(\GG)|$, e.g.\ $e(\PP^{(k)}_{n})=n-k+1$.
Furthermore, let $[n]=\{1,\dots,n\}$ for $n\in\N$. For any other notation, see Diestel \cite{diestel}.


\begin{proposition}\label{prop_gen}
Let $n,k\in\N$ such that $k\ge 2$ and $n>\frac{k^2+k-2}{2}$. Let $\PP$ be a $k$-uniform tight path on $n$ vertices. 
Furthermore, let $\alpha\in\R$ such that $1\le \alpha \le k$ and $W\subseteq V(\PP)$ be a vertex set such that for every edge $e\in E(\PP)$ we have $|e\cap W|\ge \alpha$.
Then $$|W|\ge \frac{\alpha(n-k+1)}{k}.$$

\noindent In particular, if for each $e\in E(\PP)$, $|e\cap W|> \frac{k+1}{2}$, then for $n>\frac{k^2+k-2}{2}$, $$|W|> \frac{n}{2}.$$
\end{proposition}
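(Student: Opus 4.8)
The plan is to prove the first inequality by a straightforward double counting of the quantity $\sum_{e\in E(\PP)}|e\cap W|$, and then to obtain the ``in particular'' clause by plugging in a suitable value of $\alpha$ and checking the resulting arithmetic inequality.

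First I would count $\sum_{e\in E(\PP)}|e\cap W|$ in two ways. On one hand, the hypothesis $|e\cap W|\ge\alpha$ for every edge, together with the identity $e(\PP)=n-k+1$ recorded in the preliminaries, gives $\sum_{e\in E(\PP)}|e\cap W|\ge\alpha(n-k+1)$. On the other hand, fixing an enumeration $v_1,\dots,v_n$ according to $\PP$, the edges of $\PP$ are exactly the intervals $\{v_i,\dots,v_{i+k-1}\}$ for $i\in[n-k+1]$, so every vertex of $\PP$ lies in at most $k$ of them; hence $\sum_{e\in E(\PP)}|e\cap W|=\sum_{w\in W}|\{e\in E(\PP)\colon w\in e\}|\le k|W|$. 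Combining the two estimates yields $k|W|\ge\alpha(n-k+1)$, i.e.\ $|W|\ge\frac{\alpha(n-k+1)}{k}$, which is the claimed bound.

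For the ``in particular'' part, I would use that each $|e\cap W|$ is an integer, so the hypothesis $|e\cap W|>\frac{k+1}{2}$ forces $|e\cap W|\ge\frac{k+2}{2}$ (when $k$ is even this is the next integer above $\frac{k+1}{2}$; when $k$ is odd one even gets $|e\cap W|\ge\frac{k+3}{2}$, which is still $\ge\frac{k+2}{2}$). Since $k\ge2$, the value $\alpha=\frac{k+2}{2}$ satisfies $1\le\alpha\le k$, so the first part applies and gives $|W|\ge\frac{(k+2)(n-k+1)}{2k}$. It then remains to check that this right-hand side exceeds $\frac n2$: clearing denominators, that is equivalent to $(k+2)(n-k+1)>kn$, hence to $2n>k^2+k-2$, which is exactly the standing hypothesis $n>\frac{k^2+k-2}{2}$.

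I do not expect a serious obstacle here; the only points requiring a little care are the degree bound (a vertex of a tight path on $n$ vertices lies in at most $k$ edges, the bound being attained away from the two ends of the ordering) and the use of the integrality of $|e\cap W|$ in the second part — the naive choice $\alpha=\frac{k+1}{2}$ would only yield the weaker requirement $n>k^2-1$, so the rounding up to $\frac{k+2}{2}$ is what makes the threshold $\frac{k^2+k-2}{2}$ work.
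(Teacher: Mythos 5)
Your proof is correct and follows the same double-counting argument as the paper: pairs $(v,e)$ with $v\in W\cap e$ are bounded below by $\alpha(n-k+1)$ and above by $k|W|$, and the second part uses integrality to upgrade to $|e\cap W|\ge\tfrac{k+2}{2}$. You even spell out the arithmetic equivalence $2n>k^2+k-2$ that the paper leaves implicit behind ``for sufficiently large $n$''.
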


\begin{proof}
We estimate the size of $W$ by double-counting ordered pairs $(v,e)$ consisting of a vertex $v\in W$ and an edge $e\in E(\PP)$ with $v\in e$. 
Let $\rho_{(v,e)}$ be the number of such pairs.

Considering the edges of $\PP$ it is immediate that
$$\rho_{(v,e)}\ge\alpha \cdot e(\PP)= \alpha (n-k+1).$$

Now consider the vertices in $W\subseteq V(\PP)$.
The maximum degree of the tight path $\PP$ is at most $k$, so
$$\rho_{(v,e)}\le k\cdot |W|.$$

\noindent Combining both inequations, we obtain
$$|W|\ge \frac{\alpha (n-k+1)}{k}.$$

Now consider the case that for each edge $e\in E(\PP)$ we have $|e\cap W|>\frac{k+1}{2}$, then also $|e\cap W|\ge\frac{k+2}{2}$.
Therefore we obtain for sufficiently large $n$, 
\begin{align*}
|W|\ge \frac{k+2}{2}\cdot\frac{n-k+1}{k}> \frac{n}{2}.&\qedhere
\end{align*}
\end{proof}
\medskip
	
\section{Proofs of the main results}\label{sec_tight}

\begin{proof}[Proof of Theorem \ref{lb_tight}]
Let $\GG$ be a $k$-uniform hypergraph with $\GG\rightarrow \PP^{(k)}_{n}$, i.e.\ such that every $2$-coloring contains a monochromatic $k$-uniform tight path on $n$ vertices.
We show that there are at least $\left\lceil\log_2(k+1)\right\rceil\cdot n-2k^2$ many edges in $\GG$ 
by iteratively constructing many edge-disjoint tight paths of length $n$.
Let $\lambda=\left\lceil\log_2(k+1)\right\rceil-1$, this number indicates how many iteration steps are executed.
Additionally, we define the function $q\colon\{0,\dots,\lambda\}\rightarrow \R$, 
$$q(i)=\left(1-\frac{1}{2^i}\right)(k+1),$$
which will be the parameter of the $q$-neighborhoods considered in each iteration step.
Clearly, $q$ is an increasing function and $q(i)\ge 0$ for $i\in\{0,\dots,\lambda\}$. 
For $i\le\lambda$ (or equivalently $i<\log_2(k+1)$) it can be seen that $q(i)<k$, 
which implies that the $q(i)$-neighborhood is well-defined for all $i\in\{0,\dots,\lambda\}$.
\\

As an initial step of the iteration, the Ramsey property $\GG\rightarrow \PP^{(k)}_{n}$ provides that there is some tight path on $n$ vertices in $\GG$, which we denote by $\PP_0$.
\\

\noindent From now on we proceed iteratively, so let $i= 1,\dots,\lambda$ and suppose that the iteration has been performed for all smaller values of $i$.
In each step of the iteration we construct the following:
\begin{itemize}
\item Edge sets $Z^1_i, Z^2_i\subseteq E(\PP_{i-1})$ such that $\medcup Z^1_i \cap \medcup Z^2_i=\varnothing$ and each of the sets forms a tight path in $\GG$ on precisely $\left\lfloor\frac{n}{2}\right\rfloor$ vertices.

\item A tight path $\PP_{i}$ on $n$ vertices with $E(\PP_{i})\cap N_{>q(i)}(Z^{a}_{b})=\varnothing$ for all $a\in[2], b\in[i]$.
\end{itemize}

\noindent First we construct $Z^1_i$ and $Z^2_i$ by dividing the tight path $\PP_{i-1}$ into two parts of equal length 
and considering the edge sets of the two created shorter tight paths. 
For this purpose, consider an ordering of the vertices $V(\PP_{i-1})=\{v_1,\dots,v_n\}$ according to $\PP_{i-1}$.
Let $$V^1_i=\big\{v_1,\dots,v_{\left\lfloor\frac{n}{2}\right\rfloor}\big\}\quad\text{ and }\quad V^2_i=\big\{v_{\left\lceil\frac{n}{2}\right\rceil+1},\dots,v_n\big\}.$$ 
Then $|V^1_i|=\left\lfloor\frac{n}{2}\right\rfloor=|V^2_i|$.
Now let $Z^1_i=E(\PP_{i-1}[V^1_i])$ and $Z^2_i=E(\PP_{i-1}[V^2_i])$. Clearly, these two sets form vertex-disjoint tight paths on $\left\lfloor\frac{n}{2}\right\rfloor$ vertices in $\GG$.
The size of $Z^1_i$ and $Z^2_i$ is
$$|Z^1_i|=|Z^2_i|=e(\PP^{(k)}_{\left\lfloor\tfrac{n}{2}\right\rfloor})=\left\lfloor\frac{n}{2}\right\rfloor-k+1\ge \frac{n-2k+1}{2}.$$

\noindent 
In the next step we show a key property of the edge sets $Z^{a}_{b}$ for $a\in[2]$, $b\in[i]$.
\\

\noindent \textbf{Claim.} Let $a_1,a_2\in[2]$, $b_1,b_2\in[i]$ such that $(a_1,b_1)\neq (a_2,b_2)$.
Then for any two edges $e_1\in N_{>q(i)}(Z^{a_1}_{b_1})$ and $e_2\in N_{>q(i)}(Z^{a_2}_{b_2})$ we have $$|e_1\cap e_2|<k-1.$$
\smallskip
\noindent \textit{Proof of the claim.} 
Assume that there are edges $e_1\in N_{>q(i)}(Z^{a_1}_{b_1})$, $e_2\in N_{>q(i)}(Z^{a_2}_{b_2})$ with $|e_1\cap e_2|\ge k-1$.
By definition, there is an edge $z_1\in Z^{a_1}_{b_1}$ such that $|e_1\cap z_1|>q(i)$ and an edge $z_2\in Z^{a_2}_{b_2}$ with $|e_2\cap z_2|>q(i)$.
\\

\begin{figure}[H]
\centering
\includegraphics[scale=0.65]{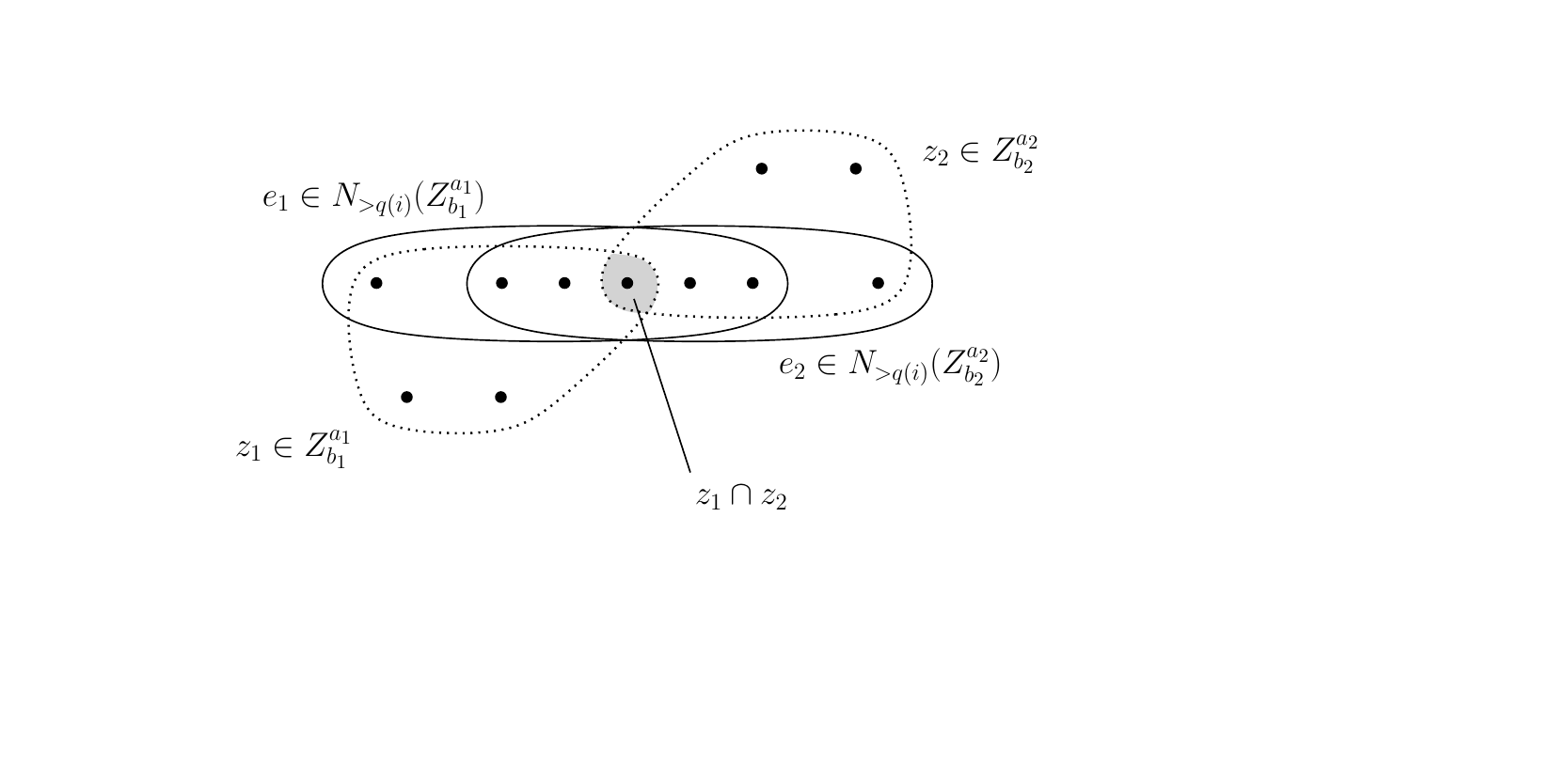}
\caption{Possible constellation of the edges in iteration step $i=1$ where $k=6$}
\end{figure}

\noindent We estimate the size of $z_1\cap z_2$ in order to find a contradiction to our assumption. 
Since $|e_1\cap e_2|\ge k-1$, we have $|e_1\backslash e_2|\le 1$ and so $|e_2\cap z_1|>q(i)-1$.
Applying this, we obtain:
\begin{align*}
|z_1\cap z_2|&\ge |e_2\cap z_1\cap z_2|\ge |e_2|-|e_2\backslash z_1|-|e_2\backslash z_2|= -|e_2|+|e_2\cap z_1|+|e_2\cap z_2|\\
&>-k+q(i)-1+q(i)=\left(1-\frac{1}{2^{i-1}}\right)(k+1)=q(i-1).
\end{align*}

If $b_1=b_2$, we have $\medcup Z^{a_1}_{b_1}\cap \medcup Z^{a_2}_{b_2}=\varnothing$ by construction. But then $q(i-1)<|z_1\cap z_2|=0$, which is a contradiction.

We suppose that $b_1\neq b_2$, then without loss of generality $b_1>b_2$ (and by this $b_1-1\ge1$).
By construction we know $z_1\in Z^{a_1}_{b_1}\subseteq E(\PP_{b_1-1})$. 
In the iteration step $b_1-1$ the tight path $\PP_{b_1-1}$ was chosen to be edge-disjoint from $\bigcup_{a\in[2],b<{b_1}} N_{>q(b_1-1)}(Z^a_b)$.
This yields that $z_1\notin N_{>q(b_1-1)}(Z^{a_2}_{b_2})$ and so
$$|z_1\cap z_2|\le q(b_1-1)\le q(i-1),$$
where the last inequality holds because $q$ is an increasing function, and we again reach a contradiction.
This concludes the proof of the claim. \hfill\qed\\


Now we find the next tight path $\PP_{i}$ in $\GG$ by considering the following coloring of $\GG$. 
For all $a\in[2]$ and $b\in[i]$, assign the color red to each edge in $N_{>q(i)}(Z^{a}_{b})$. The remaining edges are colored blue.
We will prove that there is a monochromatic blue $\PP^{(k)}_{n}$ in this coloring. We shall let $\PP_{i}$ be that path.
With this in mind, assume for a contradiction that there is a monochromatic red tight path $\RR$ on $n$ vertices in $\GG$.
\\

Clearly, each edge in $E(\RR)$ is in some neighborhood $N_{>q(i)}(Z^{a}_{b})$, $a\in[2], b\in[i]$.
Now the above claim provides that any two edges which are consecutive in $\RR$, so intersect in precisely $k-1$ vertices, 
belong to the same neighborhood $N_{>q(i)}(Z^{a}_{b})$ for some $a\in[2], b\in[i]$.
By repeating this argument, we obtain that $E(\RR)\subseteq N_{>q(i)}(Z^a_b)$ for some $a\in[2], b\in[i]$.
This implies that for all $e\in E(\RR)$, $$|e\cap \medcup Z^a_b|>q(i)\ge q(1)=\tfrac{k+1}{2}.$$
Then applying Proposition \ref{prop_gen} for the tight path $\RR$ and the vertex set $\medcup Z^a_b$ yields $|\medcup Z^a_b|>\frac{n}{2}$.
But by construction $Z^a_b$ forms a $k$-graph on precisely $\left\lfloor\frac{n}{2}\right\rfloor$ vertices, a contradiction.
\\

Consequently, there is no red tight path on $n$ vertices in the coloring, 
so the Ramsey property $\GG\rightarrow \PP^{(k)}_{n}$ implies the existence of a monochromatic blue $\PP^{(k)}_{n}$, which we denote $\PP_{i}$. 
Observe that for all $e\in E(\PP_{i})$ and for all $a\in[2], b\in[i]$, we have $e\notin N_{>q(i)}(Z^a_b)$, since all edges in these neighborhoods are colored in red.
\\

By iterating the described procedure for $i=1,\dots,\lambda$, we obtain edge sets $Z^a_b$ for $a\in[2]$, $b\in[\lambda]$ which are pairwise disjoint 
and additionally a tight path $\PP_{\lambda}$ on $n$ vertices such that each edge in $E(\PP_{\lambda})$ is not contained in any set $Z^a_b$.
This allows for the following estimate on the number of edges in $\GG$
\begin{align*}
e(\GG)&\ge \sum_{b\in[\lambda]} \big(|Z^1_b|+|Z^2_b|\big) + e(\PP_{\lambda})\ge\lambda (n-2k-1)+(n-k+1)\\
&\ge \left\lceil\log_2(k+1)\right\rceil\cdot n-(k-1)(2k+2)\ge \left\lceil\log_2(k+1)\right\rceil\cdot n-2k^2, 
\end{align*}
where in the last line we used $\left\lceil\log_2(k+1)\right\rceil\le k$.
\end{proof}
\smallskip

We point out that the above proof also applies to $3$-uniform tight paths, but does not yield an improvement of the trivial bound.
In order to obtain a refined bound in this case, we instead use a non-iterative adaption of the above proof.
\\

\begin{proof}[Proof of Theorem \ref{lb_tight2}]
Let $\GG$ be an arbitrary $3$-uniform hypergraph which has the Ramsey property $\GG\rightarrow \PP^{(3)}_{n}$. 
As before, we show that $\GG$ is a $3$-graph on at least $\tfrac{8}{3} n -\tfrac{28}{3}$ many edges.
Using the Ramsey property $\GG\rightarrow \PP^{(3)}_{n}$, there exists some tight path on $n$ vertices in~$\GG$. 
In particular, we find a shorter tight path $\PP_0$ on only $\left\lceil\tfrac{2}{3} n -\tfrac{7}{3}\right\rceil$ many vertices.
Observe that $e(\PP_0)= \left\lceil\tfrac{2}{3} n -\tfrac{7}{3}\right\rceil-2\ge \tfrac{2}{3}n-\tfrac{13}{3}$.
\\

In order to find a tight path $\PP_1$ which is edge-disjoint from $\PP_0$, we consider the following coloring.
Color all edges in the $1$-neighborhood $N_{>1}\big(E(\PP_0)\big)$ in red and the remaining edges in blue. 
Assume for a contradiction that in this coloring there is a monochromatic red tight path on $n$ vertices, say $\RR$.
Then Proposition \ref{prop_gen} applied to the tight path $\RR$ and the vertex set $V(\PP_0)$ provides a contradiction.
Since $\GG\rightarrow \PP^{(3)}_{n}$, there is a monochromatic blue tight path on $n$ vertices in~$\GG$.
This implies that there is also a blue tight path on $n-1$ vertices, i.e.\ on $n-3$ edges.
We fix such a tight path $\PP_1$ with $e(\PP_1)=n-3$. Note that $N_{>1}\big(E(\PP_0)\big)$ and $E(\PP_1)$ are disjoint edge sets.
\\

In the following, in order to find a third edge-disjoint tight path, we consider another coloring of $\GG$. 
From now on, let each edge in $E(\PP_0) \cup E(\PP_1)$ be colored red and all other edges blue.
Assume for a contradiction that there is a red tight path $\RR$ on $n$ vertices in this coloring. 
Then neither $E(\RR)\subseteq E(\PP_0)$ nor $E(\RR)\subseteq E(\PP_1)$, because the two edge sets have size strictly less than $e(\PP^{(3)}_{n})$. 
Therefore, $\RR$ consists of edges of both $E(\PP_0)$ and $E(\PP_1)$. 
Both of these edge sets are disjoint, so there exist two edges $e_1\in E(\PP_0)\cap E(\RR),e_2\in E(\PP_1)\cap E(\RR)$ which are consecutive in $\RR$, i.e.\ $|e_1\cap e_2|=2$. 
But that is a contradiction to the fact that $N_{>1}\big(E(\PP_0)\big)$ and $E(\PP_1)$ are disjoint.
Consequently, there is no red $\PP^{(3)}_{n}$ in this coloring.
By the same argument as before, there is a blue tight path $\PP_2$ on $n$ vertices in $\GG$.

Then the three edge sets $E(\PP_0), E(\PP_1), E(\PP_2)$ are pairwise disjoint. Thus,
\begin{align*}
e(\GG)\ge e(\PP_0)+e(\PP_1)+e(\PP_2)\ge \tfrac{8}{3} n -\tfrac{28}{3}.&\qedhere
\end{align*}
\end{proof}


\section*{Acknowledgments}
The author would like to thank Mathias Schacht for helpful discussions and support with the thesis on which this paper is based, Maria Axenovich for comments on the manuscript and her help with its final polish as well as the two thorough referees for their careful reading of the paper and their useful comments.

\end{document}